\documentclass[11pt, reqno]{amsart}

\usepackage{amssymb}
\usepackage[margin=2.5cm]{geometry}
\usepackage{hyperref}

\newtheorem{theorem}{Theorem}[section]
\newtheorem{lemma}[theorem]{Lemma}
\newtheorem{lem}[theorem]{Lemma}
\newtheorem{conj}[theorem]{Conjecture}
\newtheorem{prop}[theorem]{Proposition}

\newtheorem{cor}[theorem]{Corollary}
\newtheorem{remark}[theorem]{Remark}
\newtheorem{ques}[theorem]{Question}

\theoremstyle{definition}

\newcommand{\nustar}{\nu^*}

\newcommand{\gstar}{g^*}
\newcommand{\oG}{\overline{G}}
\newcommand{\eps}{\varepsilon}
\newcommand{\cupdot}{\mathbin{\mathaccent\cdot\cup}}

\numberwithin{equation}{section}

\begin{document}

\title{Many disjoint triangles in co-triangle-free graphs}



\author{Mykhaylo Tyomkyn}
\address{Department of Mathematics, California Institute of Technology, Pasadena, CA 91125, USA}
\curraddr{}
\email{tyomkyn@caltech.edu}
\thanks{}


\date{\today}

\begin{abstract}
We prove that any $n$-vertex graph whose complement is triangle-free contains $n^2/12-o(n^2)$ edge-disjoint triangles. This is tight for the disjoint union of two cliques of order $n/2$. We also prove a corresponding stability theorem, that all large graphs attaining the above bound are close to being bipartite. Our results answer a question of Alon and Linial, and make progress on a conjecture of Erd\H{o}s.
\end{abstract}
\maketitle

\section{Introduction}\label{sec:intro}

One of the classical results in extremal graph theory, Goodman's theorem~\cite{Gd}, states that in every $2$-colouring of the edges of the complete graph $K_n$ the number of monochromatic triangles is at least $\frac{1}{4}\binom{n}{3}-o(n^3)$, that is, about a quarter of all possible triangles are guaranteed to be monochromatic. 
With this in mind, Erd\H{o}s~\cite{Erd, EFGJL} asked about the number of \emph{edge-disjoint} monochromatic triangles in any $2$-colouring of $K_n$. 

To be more formal, a \emph{triangle packing} of a graph $G$ is a collection of edge-disjoint triangles in $G$. The \emph{size} of a triangle-packing is the total number of edges it contains.\footnote{This is obviously the number of the triangles in the packing times $3$. We prefer the present scaling for technical and presentation reasons.} 
Define $f(n)$ to be the largest number $m$, such that every $2$-colouring of the edges of $K_n$ contains a triangle packing of size $m$, in which each triangle is monochromatic. 

As a basic example, consider $n=6$. By the folklore fact about Ramsey numbers, any $2$-colouring of $K_6$ contains a monochromatic triangle, and it is not hard to see
that it has to contain at least two such triangles. However, they need not be edge-disjoint, as can be seen by taking a $5$-cycle and replicating a vertex. So, $f(6)=3$.
 
In general, the obvious upper bound of $f(n)\leq n^2/4-o(n^2)$ is seen to hold by considering the balanced complete bipartite graph and its complement. Erd\H{o}s~\cite{Erd, EFGJL} conjectured that this is tight.\footnote{In~\cite{Erd,EFGJL,KS} the $n^2/4+o(n^2)$ notation is used. It is understood that the additive $o(n^2)$-term can be negative, as this is the case e.g. in the above example. Hence, we believe the expression $n^2/4-o(n^2)$ better reflects the nature of the conjecture.}
\begin{conj}\label{conj:erd}
 $$f(n)=\frac{n^2}{4}-o(n^2).$$	
\end{conj}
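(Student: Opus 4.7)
The plan is to deduce Conjecture~\ref{conj:erd} from the main theorem of this paper (on graphs with triangle-free complement) via a dichotomy driven by the triangle removal lemma. Write $R$ and $\overline R$ for the two colour classes of the given edge-colouring of $K_n$, and let $\nu_\triangle(G)$ denote the maximum number of edge-disjoint triangles in a graph $G$. Since $R$ and $\overline R$ partition $E(K_n)$, a triangle packing in $R$ is automatically edge-disjoint from any triangle packing in $\overline R$, so it suffices to prove
\begin{equation*}
    \nu_\triangle(R) + \nu_\triangle(\overline R) \,\geq\, \tfrac{n^2}{12} - o(n^2),
\end{equation*}
which, after multiplying by $3$, gives the conjectured lower bound for $f(n)$.

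I would split into two regimes. In \emph{Case A}, one colour, say $\overline R$, contains only $o(n^3)$ triangles. The triangle removal lemma then lets us delete $o(n^2)$ edges from $\overline R$ to destroy all of its triangles; equivalently, transfer those edges into $R$ to obtain a supergraph $R' \supseteq R$ with triangle-free complement. The main theorem of the paper yields $\nu_\triangle(R') \geq n^2/12 - o(n^2)$, and since any triangle packing in $R'$ can use each of the $o(n^2)$ transferred edges at most once, at most $o(n^2)$ triangles of the packing touch those edges, giving $\nu_\triangle(R) \geq n^2/12 - o(n^2)$. In \emph{Case B}, both $R$ and $\overline R$ contain $\Omega(n^3)$ triangles. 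Here I would invoke an approximate triangle-packing result (R\"odl's nibble, or its refinements by Frankl--R\"odl and Keevash) separately on each colour: in a graph where almost every edge lies in $\Omega(n)$ triangles one obtains $\nu_\triangle \geq e/3 - o(n^2)$, and summing over $R$ and $\overline R$ gives $\nu_\triangle(R) + \nu_\triangle(\overline R) \gtrsim \binom{n}{2}/3 \approx n^2/6$, well beyond the target.

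The main obstacle is the intermediate regime in which $\overline R$ has, say, $n^{3-\eta}$ triangles for a small $\eta > 0$: too many for the removal lemma to leave only $o(n^2)$ waste, yet not regular enough for the nibble to run directly on $\overline R$. To bridge it, I would apply Szemer\'edi's regularity lemma to the 2-coloured $K_n$, classify each regular triple by which colour dominates, run an approximate packing inside each triple where one colour is sufficiently dense, and defer to the main theorem on the remaining bipartite-like structure. Showing that the cumulative loss from boundary regular pairs and from edges lying outside any dense regular triple stays $o(n^2)$ is the delicate accounting step; combined with the stability statement advertised in the abstract, it should also pin down the near-extremal colourings as being close to $K_{n/2,n/2}$ versus $2K_{n/2}$.
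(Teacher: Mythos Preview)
The statement you are attempting, Conjecture~\ref{conj:erd}, is \emph{not} proved in this paper --- it is left open, and Section~\ref{sec:discuss} discusses it as such (even proposing a strengthening, Conjecture~\ref{conj:flips}). The paper establishes only the special case, Theorem~\ref{thm:trfree}, in which one colour class is already triangle-free, so there is no proof here to compare against.

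That said, your Case~A is correct: the triangle removal lemma does upgrade Theorem~\ref{thm:trfree} from ``one colour is triangle-free'' to ``one colour has $o(n^3)$ triangles'', with only an $o(n^2)$ loss in the packing. This is a genuine, if modest, extension of what the paper proves.

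Case~B is where the real gap lies, and it is precisely why Conjecture~\ref{conj:erd} remains open. The hypothesis ``both colours contain $\Omega(n^3)$ triangles'' does \emph{not} imply that almost every edge lies in $\Omega(n)$ triangles of its own colour, which is what the nibble requires. For a concrete obstruction, partition $V(K_n)$ into three equal parts $A,B,C$; colour the edges inside $A$, inside $B$, across $A$--$C$ and across $B$--$C$ red, and the edges inside $C$ and across $A$--$B$ blue. Both colours have $\Theta(n^3)$ monochromatic triangles, yet every one of the $n^2/9$ blue $A$--$B$ edges lies in no monochromatic triangle whatsoever, so the best monochromatic packing covers about $7n^2/18$ edges, well short of your claimed $(1-o(1))\binom{n}{2}$. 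Your regularity patch does not escape this: after regularising, deciding which pairs of the reduced $2$-colouring lie in a monochromatic reduced triangle is the \emph{same} combinatorial problem on fewer vertices, and nothing in your sketch explains why it should be easier there. The ``delicate accounting step'' you defer is the entire difficulty; this is exactly the barrier at which Keevash and Sudakov~\cite{KS} stopped at $n^2/4.3$.
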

To draw a parallel to Goodman's theorem, Conjecture~\ref{conj:erd} states that every $2$-edge-colouring of $K_n$ admits a packing with monochromatic triangles, containing about one half of all possible edges. 

In previous works, Erd\H{o}s, Faudree, Gould, Jacobson and Lehel~\cite{EFGJL} proved a first non-trivial lower bound of $f(n)\geq ({9}/{55})n^2+o(n^2)$. Keevash and Sudakov~\cite{KS} improved this, by using the fractional relaxation of the problem, to $f(n)\geq n^2/{4.3}+o(n^2)$. Alon and Linial (see ~\cite{KS}) suggested, as a step towards Conjecture~\ref{conj:erd}, to consider the natural class of colourings, in which one of the colour classes is triangle-free. 

At this stage it will be more convenient to break the symmetry and speak of a graph and its complement. A graph is said to be \emph{co-triangle-free} if its complement is triangle-free.  Equivalently, co-triangle-free graphs are graphs with independence number at most $2$. Define $g(n)$ to be the largest number $m$, such that every co-triangle-free graph on $n$ vertices contains a triangle packing of size $m$. The same example as for $f(n)$ -- the disjoint union of two cliques of order $n/2$, shows that $g(n)\leq n^2/4-o(n^2)$, and Conjecture~\ref{conj:erd} would imply that this is tight.
\begin{conj}\label{conj:AL}
	$$g(n)=\frac{n^2}{4}-o(n^2).
	$$
\end{conj}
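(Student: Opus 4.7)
The plan is to reduce the problem to a fractional triangle packing bound and then establish the fractional bound via a stability-based case analysis.

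First, by the Haxell--R\"odl theorem (or equivalently Yuster's transference result, or a Pippenger--Spencer nibble applied to the triangle hypergraph of $G$), every $n$-vertex graph $H$ satisfies $\nu(H) \geq \nustar(H) - o(n^2)$, where $\nu(H)$ and $\nustar(H)$ denote the maximum integer and fractional triangle packing numbers, respectively. Since an integer packing of $m$ edge-disjoint triangles has size $3m$, it will suffice to prove that every co-triangle-free graph $G$ on $n$ vertices satisfies $\nustar(G) \geq n^2/12 - o(n^2)$.

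The naive uniform fractional packing $x_T := 1/\max_{e \in T} t_G(e)$ yields $\nustar(G) \geq t(G)/\max_e t_G(e)$. Combined with a Goodman-type lower bound $t(G) \geq n^3/24 - o(n^3)$ (from $\oG$ being triangle-free) and $\max_e t_G(e) \leq n-2$, this gives only $\nustar(G) \geq n^2/24 - o(n^2)$, a factor of two short of the target. In fact, for the balanced blow-up of $C_5$ into parts of size $n/5$ one checks that any uniform weighting falls below $n^2/12$, so a genuinely non-uniform fractional packing, informed by the structure of $\oG$, is required.

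My approach splits into two cases via Mantel's stability theorem: either $\oG$ is $\eps$-close in edit distance to the complete balanced bipartite graph $K_{\lfloor n/2 \rfloor, \lceil n/2 \rceil}$, or $e(\oG) \leq (1/4 - \delta)n^2$ for some $\delta = \delta(\eps) > 0$. In the near-extremal case, $G$ is $\eps n^2$-close to $K_{\lfloor n/2 \rfloor} \cupdot K_{\lceil n/2 \rceil}$, and a nibble argument inside each near-clique produces $n^2/12 - o(n^2)$ edge-disjoint triangles directly. In the far-from-extremal case, $G$ has $\Omega(\delta n^2)$ extra edges relative to the extremal configuration, and a non-uniform fractional packing, giving more weight to triangles whose maximum codegree is small, should yield $\nustar(G) \geq n^2/12 + \Omega(\delta n^2)$, comfortably beating the bound. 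The stability version of the theorem then follows from the contrapositive: any $G$ attaining $\nu(G) \geq n^2/12 - o(n^2)$ must lie in the near-extremal case, so $\oG$ is close to bipartite.

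The main obstacle, I expect, is the design and analysis of the non-uniform fractional packing in the far-from-extremal case — or dually, showing that every fractional triangle cover of $G$ with weight below $n^2/12$ forces $\oG$ to be near-bipartite. This must exploit the clique-in-non-neighbourhood property of co-triangle-free graphs (if $uv \in E(\oG)$ then $V \setminus (N_G(u) \cup N_G(v) \cup \{u,v\})$ is a clique in $G$), and will likely require a quantitative refinement of Mantel's stability suited to the LP analysis, rather than its plain qualitative form.
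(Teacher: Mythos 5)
Your first reduction (to fractional packings via Haxell--R\"odl) is exactly what the paper does, and the observation that the uniform fractional packing loses a factor of two is a sensible sanity check. But from that point on, the proposal and the paper diverge completely, and the proposal has a genuine gap at its centre.

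The proposal's engine is a Mantel-stability dichotomy: either $\oG$ is $\eps$-close to $K_{\lfloor n/2\rfloor,\lceil n/2\rceil}$, or $e(\oG)\leq (1/4-\delta)n^2$, and in the second case one ``should'' be able to find a fractional packing of size $n^2/12+\Omega(\delta n^2)$. This far-from-extremal case is the entire content of the problem, and you do not prove it --- you explicitly flag it as the main obstacle and gesture at a non-uniform LP weighting ``informed by codegrees'' without constructing one or analysing it. It is not a small gap: the $C_5$-blow-up, which you correctly identify as the danger, lives in your far-from-extremal case (it has only $2n^2/25$ edges in $\oG$) and has packing density essentially $1/4$, with the surplus over $1/4$ being only of order $1/n$ rather than $\Omega(\delta n^2)/n^2$. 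So the asserted quantitative gain $\Omega(\delta n^2)$ is actually false as stated for this family, and whatever argument does work must be much more delicate than ``far from Mantel-extremal implies a constant gain.'' The near-extremal case is also not as clean as claimed: if $\oG$ differs from $K_{n/2,n/2}$ by $\eps n^2$ edges for a \emph{fixed} $\eps$, a nibble inside each near-clique loses $\Theta(\eps n^2)$ edges, not $o(n^2)$; making $\eps\to 0$ to fix this shrinks $\delta(\eps)\to 0$, which makes the far case even weaker.

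The paper's route is quite different and sidesteps these issues. Instead of a one-shot stability dichotomy, it uses the averaging lemma (Lemma~\ref{lem:avg}: the packing density $\eta$ does not decrease when averaged over vertex-deleted subgraphs), which turns the problem into an induction on $n$. The key lemma (Lemma~\ref{lem:crittrfree}) shows that a \emph{critical} graph --- not co-bipartite, but one vertex away from co-bipartite --- already has $\nustar(G)>n(n-1)/4$, by an explicit construction: pack the cross-edges incident to the exceptional vertex and to the ``error'' sets $X,Y$ into edge-disjoint triangles, and fractionally decompose the remaining cliques-minus-a-matching via Lemma~\ref{lem:matching}. Combined with Corollary~\ref{cor:min}, this gives the inductive step of Lemma~\ref{lem:trfree}: for $n\geq 26$, $\eta(G)\leq 1/4$ forces co-bipartiteness. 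The base case $n=26$ is handled by a finite computer search (and this is unavoidable in the paper's scheme, precisely because of the $C_5$-blow-ups up to $n=25$). Your proposal has no mechanism to handle these small bottleneck configurations, and no substitute for the critical-graph lemma; to make it work you would in effect have to reinvent something equivalent to Lemma~\ref{lem:crittrfree} plus a way to dispose of the small cases.
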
 
Yuster~\cite{Yu} worked specifically on Conjecture~\ref{conj:AL}, and proved that any potential counterexample to it must have between $0.2501 n^2$ and ${3n^2}/{8}$ edges. That is, its size cannot be too close to, or too far from the Mantel threshold. 

Our aim in this note is to give a short proof of Conjecture~\ref{conj:AL}.
\begin{theorem}\label{thm:trfree}
	We have
	$$g(n)=\frac{n^2}{4}-o(n^2).$$
\end{theorem}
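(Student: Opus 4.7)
The plan is to combine the fractional LP relaxation with a stability dichotomy, and then convert the fractional bound to an integer bound via the Haxell--R\"odl theorem, which asserts that the integer and fractional triangle packing numbers of any $n$-vertex graph differ by $o(n^2)$. Writing $\nustar(G)$ and $\nu(G)$ for the maximum fractional and integer triangle packing numbers of $G$ (measured in number of triangles, so that the edge-count in Theorem~\ref{thm:trfree} is three times this), it therefore suffices to show
\[
  \nustar(G) \geq \frac{n^2}{12} - o(n^2)
\]
for every $n$-vertex co-triangle-free $G$.

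We will run a stability dichotomy based on Mantel's theorem applied to $\oG$. Fix a small constant $\eta>0$. In the \emph{near-extremal case} $e(\oG) \geq n^2/4 - \eta n^2$, the Erd\H{o}s--Simonovits stability theorem forces $G$ to be $o(n^2)$-close (in edit distance) to $K_{\lfloor n/2 \rfloor} \cupdot K_{\lceil n/2 \rceil}$. Here we will exploit that $\nustar(K_m) = \binom{m}{2}/3$, achieved by the uniform packing $x_T = 1/(m-2)$; perturbing this packing to accommodate the $o(n^2)$ edges by which $G$ differs from the extremal example then yields $\nustar(G) \geq 2\binom{n/2}{2}/3 - o(n^2) = n^2/12 - o(n^2)$, as required.

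The main obstacle will be the complementary \emph{non-extremal case} $e(\oG) \leq n^2/4 - \eta n^2$, in which $e(G) \geq n^2/4 + \eta n^2$ and, by a Kruskal--Katona-type estimate, $G$ has $\Omega_\eta(n^3)$ excess triangles above the Goodman floor $n^3/24$. We would exhibit a fractional packing by setting, for instance, $x_T := 1/\max_{e \in T} c_G(e)$, where $c_G(e)$ denotes the codegree of $e$; this is feasible since $\sum_{T \ni e} x_T \leq 1$. The technical heart is to lower-bound $\sum_T x_T$ by $n^2/12 - o(n^2)$ using the co-triangle-free codegree inequality $c_G(uv) \geq d_G(u) + d_G(v) - n + 2$ (which holds for every $uv \in E(G)$ because $\alpha(G) \leq 2$) together with the edge surplus, so as to improve on the naive uniform bound $\nustar(G) \geq t(G)/(n-2) \approx n^2/24$. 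Should this direct LP approach prove too delicate, the fallback is to apply Szemer\'edi's regularity lemma, verify the analogous weighted fractional bound on the bounded-size reduced graph, and then lift it back to $G$.
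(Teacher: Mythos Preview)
Your near-extremal case is fine and indeed matches one easy ingredient of the paper (Lemma~2.4 there handles co-bipartite $G$ directly). The gap is in the non-extremal case, which is precisely the hard case. Your proposed feasible weighting $x_T = 1/\max_{e\in T} c_G(e)$ does not reach $n^2/12$: test it on the $C_5$-blowup $\overline{C_5[m]}$ with $n=5m$, which is co-triangle-free and lies squarely in your non-extremal regime. Every triangle there contains an edge inside some part, and such edges have codegree $3m-2$, so $\sum_T x_T \approx t(G)/(3m)$. Counting triangles gives $t(G)=\tfrac{5}{6}m(m-1)(7m-2)\approx \tfrac{35}{6}m^3$, hence $\sum_T x_T \approx \tfrac{35}{18}m^2 = \tfrac{7}{90}n^2 < \tfrac{1}{12}n^2$. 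So this explicit packing falls short by a fixed constant factor, and your codegree inequality (which for \emph{edges} of $G$ is just the trivial $c_G(uv)\ge d(u)+d(v)-n$; the co-triangle-free hypothesis sharpens it only for non-edges) cannot repair this. The regularity fallback is not a rescue either: it reduces the question to a bounded-size weighted co-triangle-free instance, which is exactly the original problem at finite $n$ --- you still need a new idea to handle, e.g., the reduced graph $C_5$.

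The paper's route is genuinely different. Rather than an edge-count dichotomy, it proves a sharp structural statement: for $n\ge 26$, any co-triangle-free $G$ with $\nustar(G)\le n(n-1)/4$ must already be co-bipartite. The engine is a ``critical graph'' lemma: if $G$ is not co-bipartite but becomes so after deleting one vertex, then an explicit packing built from matchings in the two cliques of $\oG$ (plus a fractional decomposition of the leftover near-complete graphs) forces $\nustar(G)>n(n-1)/4$. This, combined with an averaging/downward-induction argument and a computer verification of the base case $n=26$ (the $C_5$-blowup at $n=25$ is the last obstruction), yields the result. The point is that the $C_5$-blowup sits exactly at the threshold for small $n$, so any argument must somehow separate it from the co-bipartite extremals; your LP bound treats all graphs uniformly and cannot do this.
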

Moreover, we classify the extremal graphs. An $n$-vertex graph is said to be \emph{$\eps$-far} from being bipartite if at least $\eps n^2$ edge deletions are required in order to make it bipartite.  
\begin{theorem}\label{thm:trfreestability}
For every $\eps>0$ there exists $\delta>0$ such that any co-triangle-free graph $G$ of order $n$, whose complement is $\eps$-far from being bipartite, has a triangle packing of size $(1/4+\delta) n^2+o(n^2)$.
\end{theorem}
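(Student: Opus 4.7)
My plan is to reduce Theorem~\ref{thm:trfreestability} to a strengthened fractional packing bound and then convert to an integer packing via the Haxell--R\"odl theorem (with Yuster's extension), which yields $\nu(G) \geq \nustar(G) - o(n^2)$. Thus it suffices to establish $\nustar(G) \geq (1/4+\delta)n^2 - o(n^2)$ for some $\delta=\delta(\eps)>0$ whenever $\oG$ is triangle-free and $\eps$-far from bipartite.

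The first input is the Erd\H{o}s--Simonovits stability theorem applied to the triangle-free graph $\oG$: since $\oG$ is $\eps$-far from bipartite, one obtains $e(\oG) \leq (1/4-\gamma)n^2$ for some $\gamma=\gamma(\eps)>0$, and hence $e(G) \geq (1/4+\gamma)n^2 - n/2$. This furnishes a quadratic surplus of edges above the extremal count; the task is to absorb this surplus into a fractional triangle packing.

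My approach is to argue that in a co-triangle-free graph essentially every edge can carry full weight in an optimal fractional triangle packing, namely $\nustar(G) \geq e(G) - o(n^2)$; combined with the surplus, this would give $\nustar(G) \geq (1/4+\gamma)n^2 - o(n^2)$. The structural basis is that triangle-freeness of $\oG$ forces strong local triangle-density in $G$: for each non-edge $uv$ of $G$ one has $N_G(u)\cup N_G(v)=V\setminus\{u,v\}$; for each vertex $u$ the set $N_{\oG}(u)$ is a clique of $G$; and each edge $uv$ of $G$ is typically surrounded by a rich triangle structure (I verified on the $C_5$-blow-up that, at least under symmetric weights, $\nustar(G)=e(G)$ holds exactly).

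The principal obstacle is to control the set of \emph{defect edges} --- edges $uv\in E(G)$ with no common $G$-neighbour --- which cap the fractional packing from above. For such an edge, $V\setminus\{u,v\}$ is covered by the two cliques $N_{\oG}(u)$ and $N_{\oG}(v)$ of $G$, a strongly ``two-overlapping-cliques'' local structure. I expect that a regularity-based stability argument can turn this local rigidity into a global statement: the existence of $\Omega(n^2)$ defect edges would force $G$ to be globally close to $K_{n/2}\cup K_{n/2}$, and hence $\oG$ close to $K_{n/2,n/2}$, contradicting the $\eps$-far hypothesis. Making this implication quantitative --- so that the loss from defect edges is $o(\gamma n^2)$ --- is the delicate step, and would likely combine Szemer\'edi's regularity lemma with a stability form of Mantel's theorem and the structural properties of co-triangle-free graphs derived above.
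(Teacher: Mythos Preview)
Your outline is genuinely different from the paper's argument, and in its present form it is not a proof: the step you yourself call ``delicate'' is precisely the heart of the matter, and a second gap lurks behind it.

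The paper never tries to show $\nustar(G)\approx e(G)$. Instead it uses the key structural Lemma~\ref{lem:trfree}: for $|H|\ge 26$, any co-triangle-free non-co-bipartite $H$ has $\eta(H)\ge \frac14+\frac{m-17}{4m(m-1)}$ where $m=|H|$. It then applies the Alon--Shapira--Sudakov additive approximation theorem (Proposition~\ref{prop:nearbip}) to $\oG$: if $\oG$ is $\eps$-far from bipartite, then a $(1-\eps/2)$-fraction of its $m$-vertex induced subgraphs are non-bipartite. Iterating Lemma~\ref{lem:avg} down to $m$-vertex subgraphs and combining these two inputs with the co-bipartite bound of Lemma~\ref{lem:bipartite} yields $\eta(G)>\frac14+\frac{1}{8m}$; Haxell--R\"odl finishes. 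Erd\H{o}s--Simonovits stability is not used at all.

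Your plan has two gaps. First, even if you succeeded in showing that $\oG$ being $\eps$-far from bipartite forces only $o(n^2)$ defect edges, this would \emph{not} give $\nustar(G)\ge e(G)-o(n^2)$. The defect-edge count bounds $\nustar(G)$ from above, not below; a non-defect edge $uv$ lies in exactly $n-2-|N_{\oG}(u)\cup N_{\oG}(v)|$ triangles, which may be a single triangle, and then it cannot in general carry full weight. (Incidentally, your check on the $C_5$-blow-up is off: for $G=\overline{C_5[k]}$ one has $\nustar(G)=e(G)-n$, not $e(G)$; the within-part edges become over-saturated before the cross edges reach weight~$1$.) To make your approach work you would need something like a fractional-decomposition criterion for co-triangle-free graphs, or at least a proof that edges lying in few triangles are themselves rare---neither is supplied. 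Second, the implication ``$\Omega(n^2)$ defect edges $\Rightarrow$ $\oG$ close to bipartite'' is asserted with only a slogan (regularity plus Mantel stability); the local two-clique structure you observe around a single defect edge does not obviously globalize, and you have not indicated how the cliques arising from different defect edges are to be aligned.
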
	
In other words, every co-triangle free graph on $n$ vertices admits a triangle packing on ${n^2}/{4}-o(n^2)$ edges, and the graphs achieving at most $n^2/4+o(n^2)$ are essentially co-bipartite.  

At the core of our proof is Lemma~\ref{lem:crittrfree}, which states that if a large graph $G$ is `critical', that is its complement $\oG$ is triangle-free, not bipartite, but can be made bipartite by deleting a vertex, then $G$ has a fractional triangle packing of size larger than $n(n-1)/4$. This, combined with the integer-fractional transference principle of Haxell and R\"odl (Proposition~\ref{prop:HR}), averaging over fractional packings, and a computer verification for small values of $n$ in the spirit of~\cite{KS}, yields the proof of Theorem~\ref{thm:trfree}. 

To prove Theorem~\ref{thm:trfreestability}, in addition to the above tools, we apply a theorem of Alon, Shapira and Sudakov (Proposition~\ref{prop:nearbip}) on the structure of graphs with a large edit distance to a monotone graph property. 

The rest of the paper is organized as follows. In Section~\ref{sec:prelim} we will collect some known facts about fractional and integer triangle packings. The proofs of the crucial Lemma~\ref{lem:crittrfree}, and of Theorem~\ref{thm:trfree} are carried out in Section~\ref{sec:trfree}. In Section~\ref{section:stability} we derive Theorem~\ref{thm:trfreestability}, and in  Section~\ref{sec:discuss} we discuss Conjecture~\ref{conj:erd} and related open questions.  

\section{Preliminaries}\label{sec:prelim}	

Denote by $\nu(G)$ the size of the largest triangle packing in $G$. In this notation, 
$$g(n)= \min\{\nu(G): |G|=n, \ G \text{ is co-triangle-free} \}.
$$ 
A \emph{fractional triangle packing} of $G$ is a function $w$ from $\mathcal{T}(G)$, the set of all triangles in $G$, to $[0,1]$ such that every edge $e\in E(G)$ satisfies $\sum_{T\in \mathcal{T}(G): e\subset T}w(T)\leq 1$.
The \emph{size} of a fractional packing is given by $3\sum_{T\in \mathcal{T}(G)} w(T)$. 
Define $\nustar(G)$ to be the maximum size of a fractional triangle packing of $G$; by compactness, this is well-defined. Note that ordinary triangle packings are precisely the integer-valued fractional packings --- indeed, determining $\nustar(G)$ is the LP-relaxation of the integer linear program of finding $\nu(G)$, so that $\nustar(G)\geq \nu(G)$ for every graph $G$. Consequently, we define the function $\gstar(n)$ to be the fractional counterpart to $g(n)$,
$$\gstar(n):=\min\{\nustar(G): |G|=n, \ G \text{ is co-triangle-free}\}.
$$

By the above, this function satisfies $\gstar(n)\geq g(n)$ for every $n$. On the other hand, as a consequence of the seminal theorem of Haxell and R\"odl~\cite{HR}, $\nu(G)\geq \nustar(G)-o(n^2)$ holds for every $n$-vertex graph $G$. Therefore, we have
\begin{prop}\label{prop:HR}
$$g(n)\geq \gstar(n)-o(n^2).$$	
\end{prop}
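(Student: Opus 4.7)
The plan is that Proposition~\ref{prop:HR} should follow almost immediately from the Haxell--R\"odl transference theorem~\cite{HR}, which asserts that for every $n$-vertex graph $G$,
\[
\nu(G) \;\geq\; \nustar(G) - o(n^2),
\]
where the $o(n^2)$-term depends only on $n$ and not on $G$. Granting this, my approach is pointwise-then-minimize: fix any co-triangle-free graph $G$ on $n$ vertices, apply Haxell--R\"odl to $G$ to obtain $\nu(G) \geq \nustar(G) - o(n^2)$, and then invoke the very definition of $\gstar(n)$ as the infimum of $\nustar(G)$ over co-triangle-free $n$-vertex graphs to conclude $\nustar(G) \geq \gstar(n)$. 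Chaining these gives $\nu(G) \geq \gstar(n) - o(n^2)$ for every co-triangle-free $G$ on $n$ vertices.

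Having established a bound valid for every such $G$, I would take the minimum of the left-hand side over all co-triangle-free graphs of order $n$. Since the error term on the right is a function of $n$ alone, it passes unchanged through the minimum, yielding $g(n) \geq \gstar(n) - o(n^2)$, as required.

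The only point needing a brief sanity check — and the nearest thing to an obstacle — is to verify that the additive slack in the Haxell--R\"odl theorem is genuinely uniform in $G$, so that it is legitimate to take the infimum over $G$ without inflating the error. This is built into their statement (the error depends only on $n$), so no extra argument is necessary. In short, the proposition is essentially a one-line consequence of~\cite{HR}; the real content of the paper lies not here but in bounding $\gstar(n)$ via the critical-graph Lemma~\ref{lem:crittrfree}.
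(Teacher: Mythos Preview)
Your proposal is correct and matches the paper's own reasoning: the proposition is stated there as an immediate consequence of the Haxell--R\"odl theorem, with the one-line justification that $\nu(G)\geq \nustar(G)-o(n^2)$ holds uniformly for all $n$-vertex graphs, from which minimizing over co-triangle-free $G$ gives the claim. Your sanity check on the uniformity of the error term is exactly the point that makes the minimization legitimate.
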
  
\noindent
By virtue of Proposition~\ref{prop:HR} we can work with fractional instead of integer triangle packings at virtually no loss. Hence, going forward, the term ``packing'', unless specified otherwise, will refer to fractional triangle packings. For an $n$-vertex co-triangle-free graph $G$ define the \emph{packing density} of $G$, to be 
$$\eta(G):=\frac{\nustar(G)}{n(n-1)}.$$
It is well-known that packing densities are monotone under averaging (see e.g. Lemma 2.1 in~\cite{KS}).
\begin{lemma}\label{lem:avg}
Suppose that $G$ is a graph on $n$ vertices, and let $G_1,\dots, G_n$ be its induced subgraphs of order $n-1$. Then 
$$\eta(G)\geq \frac{1}{n}\sum_{i=1}^n\eta(G_i).
$$
\end{lemma}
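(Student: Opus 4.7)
The plan is a straightforward averaging argument on fractional packings, working directly with the definitions.

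First, for each $i$, let $w_i$ be an optimal fractional triangle packing of $G_i = G - v_i$, so that the size of $w_i$ equals $\nustar(G_i)$. Since $G_i$ is an induced subgraph of $G$, every triangle of $G_i$ is a triangle of $G$, so by extending $w_i$ by zero on triangles of $G$ that use the vertex $v_i$, we obtain a function $\widetilde w_i:\mathcal{T}(G)\to[0,1]$.

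Next, I would form the average $w:=\frac{1}{n-2}\sum_{i=1}^n \widetilde w_i$ and verify it is a valid fractional triangle packing of $G$. For this, fix an edge $e=uv$ of $G$. Then $e$ lies in $G_i$ precisely when $v_i\notin\{u,v\}$, i.e.\ for exactly $n-2$ values of $i$; for each such $i$, the packing constraint gives $\sum_{T\ni e}\widetilde w_i(T)\leq 1$, while for the two remaining indices the contribution is zero. Summing and dividing by $n-2$ yields $\sum_{T\ni e}w(T)\leq 1$, as required.

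Now it remains to compute sizes. The size of $w$ is $\frac{1}{n-2}\sum_i \text{size}(w_i)=\frac{1}{n-2}\sum_i \nustar(G_i)$, so
\[
\nustar(G)\geq \frac{1}{n-2}\sum_{i=1}^n \nustar(G_i).
\]
Dividing by $n(n-1)$ and using the definitions $\eta(G)=\nustar(G)/(n(n-1))$ and $\eta(G_i)=\nustar(G_i)/((n-1)(n-2))$ transforms this into
\[
\eta(G)\geq \frac{1}{n(n-1)(n-2)}\sum_{i=1}^n \nustar(G_i)=\frac{1}{n}\sum_{i=1}^n \eta(G_i),
\]
which is exactly the claim. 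There is no serious obstacle here; the only point requiring care is the factor $\frac{1}{n-2}$ in the averaging, dictated by the fact that each edge of $G$ is missed by exactly two of the induced subgraphs $G_i$, and the bookkeeping of the scaling factor when passing from $\nustar$ to $\eta$.
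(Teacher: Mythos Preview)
Your proof is correct and follows essentially the same averaging argument as the paper: take optimal packings $w_i$ of the $G_i$, average with weight $\frac{1}{n-2}$, verify the edge constraint using that each edge is absent from exactly two of the $G_i$, and then unwind the scaling to pass from $\nustar$ to $\eta$. The only cosmetic difference is that you spell out the extension by zero explicitly, whereas the paper leaves this implicit.
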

\begin{proof}
Without loss of generality, assume that $V(G)=[n]$, and $V(G_i)=[n]\setminus \{i\}$.  Let $w_i$ be a packing of $G_i$ of size $\nustar(G_i)$. 
Consider $w=\frac{1}{n-2}\sum{w_i}$, which is a function on $\mathcal{T}(G)$. Any given edge $\{i,j\}$ 
contributes $0$ to $w_i+w_j$, so it receives a total weight of
\begin{equation}\label{eq:averaging}
\frac{1}{n-2}\sum_{k\neq i,j} \sum_{T=\{i,j,\ell\}\in \mathcal{T}(G_k)}w_k(T) \leq\frac{1}{n-2} \sum_{k\neq i,j} 1 =1.
\end{equation}
Thus, $w$ is a packing of $G$ of size $\frac{1}{n-2}\sum_{i=1}^n \nustar(G_i)$, which implies
$${n(n-1)\eta(G)}=\nustar(G)\geq \frac{1}{n-2}\sum_{i=1}^n \nustar(G_i)=(n-1)\sum_{i=1}^n \eta(G_i),
$$
and the desired inequality follows.
\end{proof}	
\begin{cor}\label{cor:min}
	With the above notation, $$\eta(G)\geq \min_{1\leq i\leq n} \eta(G_i).$$
	\end{cor}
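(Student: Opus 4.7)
The statement is an immediate consequence of Lemma~\ref{lem:avg}, and my plan is simply to combine that averaging bound with the trivial inequality that the average of a finite collection of real numbers is at least the minimum of that collection.

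More precisely, my proposed one-step proof would proceed as follows. By Lemma~\ref{lem:avg}, the packing density of $G$ satisfies
$$\eta(G) \geq \frac{1}{n}\sum_{i=1}^n \eta(G_i).$$
The right-hand side is the arithmetic mean of the $n$ real numbers $\eta(G_1),\dots,\eta(G_n)$, and the mean of any finite set of reals is at least the minimum of that set. Hence
$$\frac{1}{n}\sum_{i=1}^n \eta(G_i) \geq \min_{1\leq i\leq n}\eta(G_i),$$
and chaining the two inequalities gives the claim.

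There is no real obstacle here; the only thing worth flagging is that the statement only requires the existence of a vertex $i$ for which $\eta(G_i)$ is small relative to $\eta(G)$, so one never needs to know the $\eta(G_i)$ exactly. This is precisely the form in which the corollary will presumably be invoked later: to pass from a general co-triangle-free $G$ to an induced subgraph whose packing density is no larger, typically while preserving some minimality or criticality condition needed for the proof of Lemma~\ref{lem:crittrfree}.
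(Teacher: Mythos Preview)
Your proof is correct and matches the paper's approach: the corollary is stated immediately after Lemma~\ref{lem:avg} without proof, precisely because it follows from that lemma by the trivial ``average $\geq$ minimum'' inequality, exactly as you argue.
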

We say that $G$ is \emph{co-bipartite} if its complement is bipartite. Equivalently, $G$ is co-bipartite if $V(G)$ is spanned by a disjoint union of two cliques; clearly, co-bipartite graphs are co-triangle-free. We shall need the following straightforward bound on packings of co-bipartite graphs. 
\begin{lem}\label{lem:bipartite}
	For any co-bipartite $G$ of order $n\geq 6$ we have 
	$$\nustar({G})\geq \frac{n(n-2)}{4}.
	$$
\end{lem}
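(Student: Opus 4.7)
The plan is to use the structural description of co-bipartite graphs: if $\oG$ is bipartite then, fixing a bipartition $V(G) = A \cupdot B$ of $\oG$, both $A$ and $B$ induce cliques in $G$. Writing $a = |A|$ and $b = |B|$, the two cliques $G[A] \cong K_a$ and $G[B] \cong K_b$ are vertex-disjoint subgraphs, so fractional packings chosen independently in each combine into a valid packing of $G$, giving $\nustar(G) \geq \nustar(K_a) + \nustar(K_b)$. A routine calculation (place weight $1/(k-2)$ on every triangle of $K_k$ for $k \geq 3$, so that each edge is covered exactly once) shows $\nustar(K_k) = \binom{k}{2}$ for $k \geq 3$, while $\nustar(K_k) = 0$ for $k \leq 2$.

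The proof then splits by the sizes of the two cliques. When both $a, b \geq 3$, a short convexity computation using $2(a^2+b^2) \geq (a+b)^2$ gives
\[
\binom{a}{2}+\binom{b}{2} = \tfrac{1}{2}(a^2+b^2) - \tfrac{n}{2} \geq \tfrac{n^2}{4} - \tfrac{n}{2} = \tfrac{n(n-2)}{4}.
\]
When one side is small, say without loss of generality $a \leq 2$, then $b = n-a \geq 4$ and only $\binom{n-a}{2} \geq n(n-2)/4$ needs to be verified. The cases $a \in \{0,1\}$ are automatic for all $n \geq 2$, while the case $a = 2$ reduces to $(n-2)(n-3)/2 \geq n(n-2)/4$, i.e.\ $n \geq 6$ --- exactly the hypothesis of the lemma.

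The only real obstacle, and the sole reason for the assumption $n \geq 6$, is the failure of the identity $\nustar(K_k) = \binom{k}{2}$ when $k \leq 2$: the naive split bound loses whenever one of the two cliques has fewer than three vertices, and the case $a = 2$ is the tightest such situation. Once this has been patched, the bound is sharp, witnessed by $G = K_{n/2} \cupdot K_{n/2}$ for even $n$.
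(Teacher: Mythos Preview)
Your proof is correct and follows essentially the same approach as the paper: decompose $G$ into two disjoint cliques of sizes $a$ and $n-a$, use $\nustar(K_k)=\binom{k}{2}$ for $k\geq 3$, and apply convexity to get $\binom{a}{2}+\binom{n-a}{2}\geq 2\binom{n/2}{2}=n(n-2)/4$. The paper compresses all of this into two lines and does not explicitly address the case $a\leq 2$ (where $\nustar(K_a)=0<\binom{a}{2}$); your case split for $a\in\{0,1,2\}$ handles this more carefully and makes transparent why the hypothesis $n\geq 6$ is needed.
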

\begin{proof}
$G$ contains two disjoint cliques of sizes $a$ and $n-a$, for some $0\leq a\leq n/2$. Since each clique of order  $m\geq 3$ admits a packing of size $\binom{m}{2}$, by convexity of the binomial coefficients, we have 
$$\nustar({G})\geq \binom{a}{2}+\binom{n-a}{2}\geq  2\binom{n/2}{2}=\frac{n(n-2)}{4}.
$$ 
\end{proof}
A \emph{fractional triangle decomposition} of $G$ is a packing, in which $\sum_{T\in \mathcal{T}(G): e\subset T}w(T)=1$ holds for every edge $e\in E(G)$. Fractional decompositions are packings of the largest possible size $e(G)$.

\begin{lemma}\label{lem:avgdecomp}
Suppose that $G$ is a graph on $n$ vertices, and let $G_1,\dots, G_n$ be its induced subgraphs of order $n-1$. If each $G_i$ has a fractional triangle decomposition, then so does $G$.
\end{lemma}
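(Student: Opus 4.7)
The proof should proceed by exactly the same averaging construction used in Lemma~\ref{lem:avg}, with the key observation that the inequality in \eqref{eq:averaging} is in fact attained with equality when each $w_i$ is a decomposition.

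Concretely, the plan is as follows. Assume $V(G)=[n]$ and $V(G_i)=[n]\setminus\{i\}$. For each $i$, let $w_i$ be a fractional triangle decomposition of $G_i$, and extend $w_i$ by zero to a function on $\mathcal{T}(G)$ (so $w_i(T)=0$ whenever $T$ contains the vertex $i$). Set $w:=\frac{1}{n-2}\sum_{i=1}^n w_i$; this is again a nonnegative function on $\mathcal{T}(G)$.

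Now fix an arbitrary edge $\{i,j\}\in E(G)$. Since $i\notin V(G_i)$ and $j\notin V(G_j)$, the functions $w_i$ and $w_j$ contribute $0$ to the weight on $\{i,j\}$. For every $k\in [n]\setminus\{i,j\}$, the edge $\{i,j\}$ lies in $G_k$, and because $w_k$ is assumed to be a \emph{decomposition} of $G_k$, we have $\sum_{T\in\mathcal{T}(G_k):\{i,j\}\subset T} w_k(T)=1$. Summing these contributions gives
$$\sum_{T\in\mathcal{T}(G):\{i,j\}\subset T} w(T)=\frac{1}{n-2}\sum_{k\neq i,j}\,\sum_{T=\{i,j,\ell\}\in\mathcal{T}(G_k)} w_k(T)=\frac{1}{n-2}\cdot(n-2)=1.$$
Thus every edge of $G$ is covered with total weight exactly $1$, so $w$ is a fractional triangle decomposition of $G$, completing the proof.

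There is no substantive obstacle here: the argument is essentially the equality case of Lemma~\ref{lem:avg}. The only point requiring care is the bookkeeping of which vertex-deleted subgraphs contribute to a given edge, but this is immediate from the setup.
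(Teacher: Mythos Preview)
Your proposal is correct and is exactly the paper's own proof: define $w$ as in the proof of Lemma~\ref{lem:avg} and observe that \eqref{eq:averaging} holds with equality when each $w_i$ is a decomposition. You have simply written out the details more explicitly than the paper does.
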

\begin{proof}
Assuming $V(G)=[n]$, and $V(G_i)=[n]\setminus\{i\}$, define $w$ as in the proof of Lemma~\ref{lem:avg}. We obtain~\eqref{eq:averaging} with equality in place of the inequality. Thus, $w$ is a fractional decomposition of $G$. 
\end{proof}	
 Let $K_n^{-k}$ denote the graph obtained from $K_n$ by removing a $k$-edge matching. 
\begin{lem}\label{lem:matching}
For all integers $n\geq 7$ and $0\leq k\leq \lfloor n/2\rfloor$ the graph $K_n^{-k}$ has a  fractional triangle decomposition. 
\end{lem}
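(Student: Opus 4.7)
The plan is to induct on $n$, with Lemma~\ref{lem:avgdecomp} doing the work in the inductive step. Concretely, for $n \geq 8$ and any $v \in V(K_n^{-k})$, the vertex-deleted graph $K_n^{-k} - v$ is isomorphic to $K_{n-1}^{-k}$ when $v$ is not covered by the removed matching, and to $K_{n-1}^{-(k-1)}$ when $v$ is covered. A quick check shows that in both cases the remaining matching parameter lies in $\{0, \ldots, \lfloor (n-1)/2 \rfloor\}$: the unmatched case only arises when $2k < n$, which already forces $k \leq \lfloor (n-1)/2 \rfloor$, and the matched case follows from $k - 1 \leq \lfloor n/2 \rfloor - 1 \leq \lfloor (n-1)/2 \rfloor$. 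Hence the inductive hypothesis applies to each vertex-deleted subgraph, and Lemma~\ref{lem:avgdecomp} delivers a fractional triangle decomposition of $K_n^{-k}$ itself.

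This reduces everything to the base case $n = 7$, which I would handle by exhibiting explicit fractional decompositions of $K_7^{-k}$ for $k \in \{0, 1, 2, 3\}$. Exploiting the natural symmetry of $K_n^{-k}$, assign to each triangle a weight $w_i$ depending only on the number $i$ of its vertices that are covered by the removed matching. The edge-covering constraints then reduce to at most three linear equations, one per edge type (both endpoints unmatched, one endpoint matched, or both endpoints matched and lying in different matching pairs), and in each case a direct computation produces a non-negative solution: uniform $w_0 = 1/5$ for $k = 0$; $w_0 = 1/6,\ w_1 = 1/4$ for $k = 1$; $w_0 = w_2 = 1/3,\ w_1 = 1/6$ for $k = 2$; and $w_2 = 1/4,\ w_3 = 3/8$ for $k = 3$ (the case in which all but one vertex is matched, so the ``both endpoints unmatched'' equation is vacuous).

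The main obstacle is that the induction cannot be pushed below $n = 7$, because the statement itself fails there. For instance, in $K_6^{-2}$ the edge between the two unmatched vertices lies in only four triangles, and the cover constraints at the remaining edges force each of those four triangle weights to be zero; so $K_6^{-2}$ admits no fractional triangle decomposition at all. Hence the base case $n = 7$ must be verified by hand, and the lower bound $n \geq 7$ in the lemma is sharp rather than a technical artefact.
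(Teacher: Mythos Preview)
Your argument is correct and follows the paper's approach exactly: induction on $n$ via Lemma~\ref{lem:avgdecomp}, with the base case $n=7$ verified by hand. You simply supply more detail than the paper does---explicit weights for the four base cases, a clean check that the matching parameter stays in range after vertex deletion, and the observation that $K_6^{-2}$ shows the bound $n\geq 7$ is sharp.
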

\begin{proof}
It is easy to check by hand that this holds for $n=7$. The rest follows by induction, applying Lemma~\ref{lem:avgdecomp}. 
\end{proof}	

\section{Proof of Theorem~\ref{thm:trfree}}\label{sec:trfree}

Theorem~\ref{thm:trfree} follows readily from the following stability result.
\begin{lemma}\label{lem:trfree}
Suppose that $G$ is co-triangle-free, with $|G|\geq 26$ and $\eta(G) \leq {1}/{4}$. Then ${G}$ is co-bipartite. 
\end{lemma}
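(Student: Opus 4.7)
The plan is to prove the lemma by induction on $n = |G|$, taking $n = 26$ as the base case and combining the ``critical graph'' Lemma~\ref{lem:crittrfree} announced in the introduction with the averaging inequality Corollary~\ref{cor:min}. The key observation is that if $\oG$ is triangle-free and non-bipartite, then either $\oG$ becomes bipartite after removing a single vertex (the critical situation, handled directly by Lemma~\ref{lem:crittrfree}), or every single-vertex deletion leaves $\oG$ non-bipartite, in which case one can hope to descend via averaging to a smaller co-triangle-free graph.

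For the inductive step, fix $n\geq 27$ and assume the lemma holds for co-triangle-free graphs on $n-1$ vertices. Let $G$ satisfy the hypotheses and consider its induced subgraphs $G_1,\dots,G_n$ of order $n-1$, each of which is again co-triangle-free. By Corollary~\ref{cor:min} there is some $i$ with $\eta(G_i)\leq \eta(G)\leq 1/4$, and since $|G_i|=n-1\geq 26$, the inductive hypothesis forces $G_i$ to be co-bipartite, i.e.\ $\oG-i$ is bipartite. If $\oG$ itself is bipartite then $G$ is co-bipartite and we are done. Otherwise $\oG$ is triangle-free and non-bipartite, yet becomes bipartite after the removal of $i$ --- precisely the critical hypothesis. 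Lemma~\ref{lem:crittrfree} then supplies $\nustar(G)>n(n-1)/4$, i.e.\ $\eta(G)>1/4$, contradicting the assumption and so ruling out this case. Hence $G$ must in fact be co-bipartite, completing the induction.

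The base case $n=26$ would be disposed of by the explicit (computer) verification already advertised in the introduction and used previously in~\cite{KS}: enumerate all co-triangle-free graphs on $26$ vertices whose complement is non-bipartite, and for each check via the LP defining $\nustar$ that $\eta>1/4$. The threshold $26$ is presumably tailored both to this finite search and to the range in which Lemma~\ref{lem:crittrfree} becomes available (it is stated for ``large'' $G$). The main obstacle of the whole argument thus lies not in the reduction above but inside Lemma~\ref{lem:crittrfree}, where the genuine combinatorial work takes place: one must produce, for every critical $G$, a fractional triangle packing strictly beating $n(n-1)/4$, plausibly by combining the fractional decompositions of near-complete graphs $K_n^{-k}$ furnished by Lemma~\ref{lem:matching} with careful handling of the few non-edges of $G$ (which, after the critical vertex is removed, form a matching inside two cliques with an extra odd cycle through the critical vertex). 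Once that lemma is in hand, the present reduction runs in just a few lines.
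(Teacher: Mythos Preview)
Your induction step is correct and essentially identical to the paper's: pick $G_i$ with $\eta(G_i)\le 1/4$ via Corollary~\ref{cor:min}, apply the inductive hypothesis to get $G_i$ co-bipartite, and then either $G$ is co-bipartite or $G$ is critical and Lemma~\ref{lem:crittrfree} yields a contradiction.

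Where your proposal diverges from the paper is the base case. You suggest enumerating all co-triangle-free graphs on $26$ vertices with non-bipartite complement and solving an LP for each; this is computationally hopeless, as the number of triangle-free graphs on $26$ vertices is astronomical. The paper's verification is more delicate and, importantly, \emph{also} relies on Lemma~\ref{lem:crittrfree}. It builds lists $L_n$ incrementally from $n=6$, at each stage retaining only those triangle-free $H$ with $\eta(\overline{H})\le 1/4$ (this pruning is justified by Corollary~\ref{cor:min}). At $n=17$ it performs a one-off deletion of all bipartite graphs from the list; thereafter, for $n\ge 18$, Lemma~\ref{lem:crittrfree} together with Corollary~\ref{cor:min} guarantees that any non-bipartite triangle-free $H$ with $\eta(\overline{H})\le 1/4$ must extend a graph already in $L_{n-1}$. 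This double pruning is what makes the search tractable, and it terminates with $L_{26}=\emptyset$ (in fact $L_{25}$ contains only the $5$-blowup of $C_5$). So Lemma~\ref{lem:crittrfree} is not merely the engine of the induction step --- it is also indispensable for making the base case feasible, and its threshold $n\ge 18$ (rather than $n\ge 27$) is chosen precisely for this purpose.
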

The reason for the threshold of $26$ is that for $|G|\leq 25$, the `natural enemy' of bipartite graphs in our problem, namely the blow-up of the $5$-cycle, achieves  $\eta \leq 1/4$. This, however, happens only for small $n$: at $n=25$, the $5$-blow-up of $C_5$ attains precisely $\eta=1/4$, and for larger $n$, as Lemma~\ref{lem:trfree} claims, only co-bipartite graphs achieve packing densities of at most $1/4$.

Let us first show that Theorem~\ref{thm:trfree} is indeed implied by Lemma~\ref{lem:trfree}.

\begin{proof}[Proof of Theorem~\ref{thm:trfree}] 
The complement of $K_{n/2,n/2}$ certifies that $g(n)\leq n^2/4-o(n^2)$. To see the other direction, suppose for a contradiction that $g(n)\leq n^2/4-\Omega(n^2)$. Then, by Proposition~\ref{prop:HR}, we have 
$$g^*(n)\leq  \frac{n^2}{4}-\Omega(n^2).$$
This means, there exists $\eps>0$ such that for large $n$ there is a co-triangle-free $G$ with $n$ vertices and $\eta(G)<1/4-\eps$. By Lemma~\ref{lem:trfree}, $G$ is co-bipartite. However, in this case, by Lemma~\ref{lem:bipartite},
$$
\nustar(G) \geq \frac{n(n-2)}{4},  
$$
so 
$\eta(G)\geq {1}/{4}-O(1/n)$, contradicting $\eta(G)<1/4-\eps$. Hence, 
$$g(n)= \frac{n^2}{4}-o(n^2).$$
\end{proof}

The proof of Lemma~\ref{lem:trfree} is carried out by induction on $n$. For both the induction base ($n=26$) and the step we require the following crucial lemma. Call a co-triangle-free graph $G$ \emph{critical} if $G$ is not co-bipartite, but contains a vertex whose removal will make it co-bipartite. 
\begin{lemma}\label{lem:crittrfree}
	Every critical graph $G$ with $|G|=n\geq 18$ satisfies
	$$\nustar(G)\geq \frac{n^2-17}{4}>\frac{n(n-1)}{4}.
	$$
	In particular,
	$$\eta(G)> \frac{1}{4}.$$
\end{lemma}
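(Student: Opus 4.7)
The plan is to leverage the rigid structural description of a critical graph. Let $v\in V(G)$ be such that $\oG - v$ is bipartite with parts $A,B$; this makes $A$ and $B$ cliques in $G$. Put $N_A = N_{\oG}(v)\cap A$, $N_B=N_{\oG}(v)\cap B$, $p=|N_A|$, $q=|N_B|$, $a=|A|$, $b=|B|$ (so $a+b=n-1$), and write $A=N_A\cupdot A_2$, $B=N_B\cupdot B_2$ accordingly.

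Two preliminary structural facts carry essentially all the content. First, both $N_A$ and $N_B$ are non-empty: otherwise one could reassign $v$ to the opposite side and exhibit a bipartition of $\oG$, contradicting that $G$ is not co-bipartite. Second, every pair $(a_1,b_1)\in N_A\times N_B$ is a $G$-edge, since otherwise $\{v,a_1,b_1\}$ is a triangle in $\oG$. In particular, $N_A\cup N_B$ spans a clique in $G$, of size $p+q$.

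With these in hand, I would build a fractional triangle packing of the required size. The backbone is the uniform fractional triangle decomposition of each clique $K_A$ and $K_B$ (giving weight $1/(a-2)$ to every $A$-triangle, analogously for $B$), yielding total weight $\binom{a}{2}+\binom{b}{2}=\tfrac{(n-1)(n-2)}{2}-ab\ge\tfrac{(n-1)(n-3)}{4}$ by convexity. This falls short of the target $(n^2-17)/4$ by exactly $n-5$ in the balanced case $a=b=(n-1)/2$, and by less when $a,b$ are unbalanced. The remaining surplus is produced by two further triangle families. Family (a): the triangles $\{v,a_2,b_2\}$ with $a_2\in A_2$, $b_2\in B_2$, $a_2b_2\in E(G)$. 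These are the only triangles in $G$ that can use $v$'s edges, and can be placed at total weight equal to the bipartite fractional-matching value of $G[A_2,B_2]$, contributing up to $3\min(a-p,b-q)$ to the packing size. Family (b): the cross-triangles $\{a_1,a_1',b_1\}$ and $\{a_1,b_1,b_1'\}$ living in the $N_A\cup N_B$-clique, which cover the $N_A\times N_B$ edges untouched by the backbone. To accommodate these, backbone weights on $A$-triangles with $\ge 2$ vertices in $N_A$ (respectively $B$-triangles with $\ge 2$ in $N_B$) are slightly reduced and simultaneously compensated by family-(b) triangles, in a local swap that yields a net packing-size gain proportional to $\min(p,q)$.

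A short case split on $p+q$ then closes the argument. When $p+q$ is moderate — roughly $\le (n+7)/3$ — family (a) alone more than fills the shortfall $n-5-\tfrac14(a-b)^2$, because $\min(a-p,b-q)$ is then linear in $n$. When $p+q$ is large, family (b) carries the same load using the sizeable $N_A\cup N_B$-clique. The main technical obstacle I anticipate is the bookkeeping for family (b): one must verify that the combined weight adjustments leave every triangle weight non-negative and every edge weight at most $1$, and then assemble the chain of elementary inequalities ensuring that $\binom{a}{2}+\binom{b}{2}$, plus the contributions from (a) and (b), provably exceeds $(n^2-17)/4$ in every parameter regime $(a,b,p,q)$ with $p,q\ge 1$ and $a+b=n-1$.
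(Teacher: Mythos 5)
Your setup (the critical vertex $v$, the partition of $V(G)\setminus\{v\}$ into cliques $A$ and $B$, the non-emptiness of $N_A,N_B$, and the completeness of $G[N_A,N_B]$) matches the paper's exactly, and the backbone decomposition of the two cliques is also the same. However, your packing construction has a genuine gap, and it stems from a false structural claim: you assert that the triangles $\{v,a_2,b_2\}$ with $a_2\in A_2$, $b_2\in B_2$, $a_2b_2\in E(G)$ are ``the only triangles in $G$ that can use $v$'s edges.'' They are not. Since $A_2=N_G(v)\cap A$ and $A$ is a clique in $G$, every pair $a_2,a_2'\in A_2$ yields a triangle $\{v,a_2,a_2'\}$, and similarly inside $B_2$. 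These same-side $v$-triangles always exist in abundance (a matching inside $A_2$ plus a matching inside $B_2$ covers $\geq |A_2|+|B_2|-2=n-3-p-q$ of $v$'s incident edges), and they are exactly the paper's families $\mathcal{A}$ and $\mathcal{B}$. You have left them out entirely.

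Omitting them breaks the case analysis. Take $p=q=1$, $a=b=(n-1)/2$, and let $\oG[A_2,B_2]$ be complete bipartite; this keeps $\oG$ triangle-free (off $v$, $\oG$ is bipartite, and $v$'s two $\oG$-neighbours are $\oG$-non-adjacent by your own observation), and $G$ is still critical. Now $G[A_2,B_2]$ has no edges, so your family (a) is empty; and with $|N_A|=|N_B|=1$ the $N_A\cup N_B$ ``clique'' is a single edge, so family (b) is empty too. Your backbone gives $\binom{a}{2}+\binom{b}{2}=\tfrac{(n-1)(n-3)}{4}$, falling short of $\tfrac{n^2-17}{4}$ by exactly $n-5$, and nothing in your construction closes the gap. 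The diagnosis of the ``moderate $p+q$'' case is also off: $\min(a-p,b-q)$ being linear in $n$ measures $|A_2|,|B_2|$, not the matching number of $G[A_2,B_2]$, which can be $0$. In the paper, the same-side $v$-triangles $\mathcal{A}\cup\mathcal{B}$ contribute $\geq |A_2|+|B_2|-2$ edges and the cross-edge triangles $\mathcal{X}\cup\mathcal{Y}$ (each formed by a matching edge inside $N_A$ or $N_B$ joined to a single fixed vertex on the opposite side, not the full $N_A\cup N_B$ clique you propose) contribute $\geq p+q-2$, so the two families always telescope to $\geq n-5$ regardless of $p,q$; the remaining same-side edges form $K_m$ minus a matching, handled by Lemma~\ref{lem:matching}. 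To repair your argument, replace family (a) by the same-side $v$-triangles; this removes the spurious dependence on $G[A_2,B_2]$ and lets the two families complement one another uniformly in the parameters, which is precisely what makes the paper's bookkeeping clean.
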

\noindent
Before giving the proof of Lemma~\ref{lem:crittrfree}, let us show how it implies Lemma~\ref{lem:trfree}.
\begin{proof}[Proof of Lemma~\ref{lem:trfree}]
We proceed by induction on $n$. The statement for $n=26$ has been computer verified via the following algorithm (the program and the execution logs are provided in supplemental files to this paper). Our code is a modification of the code from the paper of Keevash and Sudakov~\cite{KS}, tailored to meet the specific requirements of our proof.

\textbf{Initialization:} create the list $L_n$ of all triangle-free graphs on $n=6$ vertices, and calculate $\nu^*$ for their complements. 

\textbf{Iteration:} For each $n\geq 7$, go through all one-vertex triangle-free extensions of the graphs in $L_{n-1}$, 
and select from them the graphs $H$ with $\eta(\overline{H})\leq 1/4$, to form the list $L_n$. By Corollary~\ref{cor:min}, any other triangle-free graph $G$ of order $n$ must have $\eta(\oG)>1/4$.  If $L_n$ is empty, the algorithm terminates. Otherwise, move to the next iteration step.

At $n=17$, before proceeding with the iteration, delete from $L_{17}$ all bipartite graphs (be aware that this is a one-off action, which is carried out only at $n=17$). After that, perform the iteration step for $n=18$, and continue as previously. By Lemma~\ref{lem:crittrfree} and Corollary~\ref{cor:min}, for $n\geq 18$ every co-triangle-free $n$-vertex graph $G$ with $\chi(\oG)>2$ and $\eta(G)\leq 1/4$ is a one-vertex extension of an $(n-1)$-vertex graph with the same properties.
Therefore, for each $n\geq 18$ the list $L_n$ will contain precisely all triangle-free, non-bipartite $n$-vertex graphs $H$ satisfying $\eta(\overline{H})\leq 1/4$.

\textbf{Termination:} The algorithm terminates if for some $n$ the list $L_n$ is empty. 

\textbf{Outcome:} The program run terminates at $n=26$, when $L_{26}$ turns out to be empty. In fact, at $n=25$ the single graph in $L_n$, up to isomorphism, is the 5-blowup of $C_5$, and it has no valid extensions to $n=26$. This completes the proof of the induction base.

To see that Lemma~\ref{lem:crittrfree} also implies the induction step for Lemma~\ref{lem:trfree}, let $G$ be as in Lemma~\ref{lem:trfree}, with $|G|=n\geq 27$, and let $G_1,\dots,G_n$ be the induced subgraphs of $G$ of order $n-1$. By Corollary~\ref{cor:min}, we have $\eta(G_i)\leq1/4$ for some $i$, and note that $G_i$ is co-triangle-free. By the induction hypothesis, $G_i$ is co-bipartite. If $G$ is co-bipartite, we are done. Otherwise, $G$ is critical, so, by Lemma~\ref{lem:crittrfree}, we have $\eta(G)> 1/4$, a contradiction. 
\end{proof}	
\begin{remark}
Strictly speaking, the proof of Lemma~\ref{lem:trfree} uses Lemma~\ref{lem:crittrfree} only for $n\geq 27$. The latter was stated and proved for $n\geq 18$ for the purpose of accelerating the computer search needed to prove Lemma~\ref{lem:trfree} for $n=26$.
\end{remark}
\begin{proof}[Proof of Lemma~\ref{lem:crittrfree}]
Suppose that $n\geq 18$, and $G$ is a critical graph on $n$ vertices. Then there exists a vertex $v\in G$ such that $G':=\overline{G\setminus\{v\}}$ is bipartite.
Let $U\cupdot W$ be a bipartition of $V(G')$, that is $G'=G'[U,W]$, and note that the graphs $G[U]$ and $G[W]$ are complete. Note also that we can assume 
$$\min\{|U|,|W|\}\geq 7,$$
as otherwise $G[U]\cup G[W]$ would contain a packing of size more than $n(n-1)/4$, and we would be done. 
Define
\begin{align*}
	A&:=N_G(v)\cap U,\\ 
	B&:=N_G(v)\cap W,\\ 
	X&:=U\setminus A=N_{\oG}(v)\cap U, \text { and } \\ 
	Y&:=W\setminus B=N_{\oG}(v)\cap W.
\end{align*}

Note that $X$ and $Y$ are non-empty, since if, for instance, $X=\emptyset$, then $G[U\cup \{v\}]$ and $G[W]$ are complete, so $G$ would be co-bipartite, a contradiction. Moreover, since for every $(x',y')\in X\times Y$ we have $\{x',y'\}\subseteq X\cup Y\subseteq N_{\oG}(v)$, we must have $\{x',y'\}\in E(G)$, as $\oG$ is triangle-free. Hence, $G[X,Y]$ is complete bipartite.

First suppose that $|Y|$ is even (the case when $|X|$ is even is symmetric).
Let $x\in X$ be an arbitrary vertex. In the complete graph $G[Y]$ select a matching $M_Y$ on  $|Y|$ vertices, and note that $\mathcal{Y}:=\{y_1y_2x:y_1y_2\in M_Y\}$ is a triangle packing in $G$ containing $|V(M_Y)|= |Y|$ edges from $G[U,W]$. Next, let $y\in Y$ be an arbitrary vertex, and in the complete graph $G[X\setminus x]$ select a matching $M_X$ on at least $|X|-2$ vertices, so that  $\mathcal{X}:=\{x_1x_2y:x_1x_2\in M_X\}$ is a triangle packing in $G$ with $|V(M_X)|\geq |X|-2$ edges from $G[U,W]$. By construction, $\mathcal{X}$ and $\mathcal{Y}$ are edge-disjoint, and $\mathcal{X}\cup \mathcal{Y}$ contains at least $|X|+|Y|-2$ edges from $G[U,W]$. 

If both $|X|$ and $|Y|$ are odd, we select $x\in X$ arbitrarily, and $M_Y$ to be a matching on $|Y|-1$ vertices. In the second step, we select $y$ to be the sole vertex in $Y\setminus M_Y$, and $M_X$ to be a matching on $|X|-1$ vertices in $X\setminus\{x\}$. We obtain two edge-disjoint triangle packings in $G$, $\mathcal{X}$ and $\mathcal{Y}$, containing together $|X|+|Y|-2$ edges from $G[U,W]$.

Similarly, in the complete graphs $G[A]$ and $G[B]$ we select matchings $M_A$ and $M_B$, with at least $|A|-1$ and $|B|-1$ vertices, respectively, to define triangle packings $\mathcal{A}:=\{a_1a_2v : a_1a_2\in M_A\}$ and $\mathcal{B}:=\{b_1b_2v : b_1b_2\in M_B\}$. Note that $\mathcal{A}$ contains at least $|A|-1$ edges from $G[v,U]$, $\mathcal{B}$ contains at least $|B|-1$ edges from $G[v,W]$, and $\mathcal{A}, \mathcal{B},\mathcal{X}$ and $\mathcal{Y}$ are edge-disjoint.

Therefore, $\mathcal{A}\cup \mathcal{B}\cup \mathcal{X}\cup \mathcal{Y}$ is a triangle packing of $G$ containing at least 
$$|A|-1+|B|-1+|X|+|Y|-2=|U|+|W|-4=n-5$$ 
edges that are not in $G[U]$ or $G[W]$.  
The edges of $G[U]$ and $G[W]$ that are not part of $\mathcal{A}\cup \mathcal{B}\cup \mathcal{X}\cup \mathcal{Y}$ form on each of $U$ and $W$ a complete graph with a matching removed. Since $\min\{|U|,|W|\}\geq 7$, by Lemma~\ref{lem:matching} those are fractionally decomposable into triangles . Hence, 
\begin{align*}
\nustar(G)&\geq \binom{|U|}{2}+\binom{|W|}{2}+(n-5)\geq  2\binom{\frac{n-1}{2}}{2}+n-5\\
&=\frac{(n-1)(n-3)+4n-20}{4}=\frac{n^2-17}{4}>\frac{n^2-n}{4}.
\end{align*}  
In particular,
$$\eta(G)=\frac{\nustar(G)}{n(n-1)}>\frac{1}{4}.
$$
\end{proof}	

\section{Proof of Theorem~\ref{thm:trfreestability}}\label{section:stability}
For an $n$-vertex graph $G$ let $\Delta_{bip}(G)$ denote the \emph{edit distance} of $G$ to the set of bipartite graphs, i.e. the minimum number of edge deletions needed to turn $G$ into a bipartite graph. Let $E_{bip}(G):=\Delta_{bip}(G)/n^2$ be the corresponding density. So, $G$ being $\eps$-far from being bipartite  is equivalent to $E_{bip}(G)\geq\eps$. 

In order to prove Theorem~\ref{thm:trfreestability}, we need the following deep theorem of Alon, Shapira and Sudakov on monotone graph properties (\cite{AShSu}, Theorem 1.2), which we state here for the property of being bipartite.
\begin{prop}\label{prop:nearbip}{\cite{AShSu}}
For every $\eps>0$ there is $m(\eps)$ with the following property: let $G$ be any graph and suppose we randomly pick a subset $M$ on $m$ vertices from $V(G)$. Denote by $G'$ the graph induced by $G$ on $M$. Then 
$$Prob[|E_{bip}(G')-E_{bip}(G)|>\eps]<\eps.
$$  	
\end{prop}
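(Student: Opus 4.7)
The plan is to prove Proposition~\ref{prop:nearbip} by controlling the two tails $E_{bip}(G')-E_{bip}(G)$ and $E_{bip}(G)-E_{bip}(G')$ separately, each with probability at least $1-\eps/2$, and then to combine via the union bound. Since $\Delta_{bip}(G)=e(G)-\mathrm{MaxCut}(G)$ and $e(G[M])$ is an unbiased estimator of $\binom{m}{2}e(G)/\binom{n}{2}$, this is equivalent to the Goldreich--Goldwasser--Ron property test for Max-Cut transcribed into edit-distance language, and I would follow that blueprint.

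For the upper direction $E_{bip}(G')\le E_{bip}(G)+\eps/2$, I fix an optimal bipartition $V(G)=V_1\cupdot V_2$ realizing $\Delta_{bip}(G)$, restrict it to $M$, and observe that the restriction is a feasible witness, so $\Delta_{bip}(G')$ is at most its number of monochromatic edges. This count is a sum of pairwise indicators with expectation $\binom{m}{2}\Delta_{bip}(G)/\binom{n}{2}$, and Hoeffding's inequality for sampling without replacement concentrates it within $(\eps/4)m^2$ of its mean once $m\ge C/\eps^2$. Dividing by $m^2$ yields the desired upper bound.

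For the lower direction $E_{bip}(G)\le E_{bip}(G')+\eps/2$, the key device is the majority-vote extension: given a bipartition $\pi=(L_1,L_2)$ of $M$, extend it to $\pi^{\ast}$ on $V(G)$ by placing each $v\notin M$ on whichever side of $\pi$ has fewer $M$-neighbours of $v$. A single Chernoff bound controls each fraction $|N_G(v)\cap L_i|/|L_i|$ to within $\eps'$ with failure probability $\exp(-\Omega(\eps'^2 m))$. I would then argue that, with high probability over $M$, for every choice of $\pi$ the number of $\pi^{\ast}$-monochromatic edges of $G$ exceeds $(n/m)^2$ times that of $\pi$ on $G[M]$ by at most $(\eps/2)n^2$. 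Since the former is at least $\Delta_{bip}(G)$ by definition of the edit distance, the latter is at least $\Delta_{bip}(G)-(\eps/2)n^2$, which after rescaling gives $E_{bip}(G')\ge E_{bip}(G)-\eps/2$, as required.

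The main obstacle lies entirely in the lower direction: the ``for every $\pi$'' clause hides an implicit union bound over the $2^m$ bipartitions of $M$, and a naive per-$\pi$ Chernoff bound would force $m$ to depend on $n$. I see two ways around this. The first is a two-round sampling reduction: draw a small inner sample $M_0\subseteq M$ of size depending on $\eps$ alone and approximate the majority rule using only $M_0$, so that only $2^{|M_0|}$ effective extensions matter and the outer sample controls vertex-neighbourhood statistics via Chernoff. The second, which is the route taken by Alon, Shapira and Sudakov for arbitrary monotone properties, is to invoke Szemer\'edi regularity to reduce ``far from bipartite'' to the appearance of a bounded-size cluster-graph obstruction whose sampling behaviour is governed by Chernoff in a single shot. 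Either route yields a threshold $m(\eps)$ depending only on $\eps$, completing the proof.
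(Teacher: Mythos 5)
The paper does not prove Proposition~\ref{prop:nearbip} at all: it is quoted verbatim (specialized to the property of being bipartite) from Theorem~1.2 of Alon, Shapira and Sudakov~\cite{AShSu}, so there is no in-paper argument to compare against. Your sketch therefore supplies a missing proof rather than an alternative one.

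The landscape you describe is right. The statement is indeed the Goldreich--Goldwasser--Ron Max-Cut sampling theorem transcribed into edit-distance form, and the two routes you list for the lower tail --- two-round (inner/outer) sampling \`a la GGR, or regularity \`a la~\cite{AShSu} --- are the two standard ways to avoid a union bound over the $2^m$ partitions of $M$; the first gives $m(\eps)$ polynomial in $1/\eps$ for bipartiteness specifically, while the regularity route (which~\cite{AShSu} use in order to handle all monotone properties uniformly) yields a tower-type $m(\eps)$. Since the proposition as stated only requires that $m$ depend on $\eps$, either works.

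Two places in the sketch need repair. In the upper direction, the number of monochromatic edges inside $M$ under the restricted optimal bipartition is a degree-two polynomial in the vertex-inclusion indicators, not a sum of independently sampled items, so ``Hoeffding for sampling without replacement'' does not literally apply; you want a vertex-exposure martingale (Azuma/McDiarmid) or a $U$-statistics concentration bound, which does give the same $O(\eps m^2)$ fluctuation. More seriously, in the lower direction the majority-vote extension $\pi^{\ast}$ as you describe it controls only the $M$-to-$(V\setminus M)$ monochromatic edges; the $\binom{n-m}{2}$ pairs inside $V\setminus M$, which dominate once $m\ll n$, are not touched by the per-vertex Chernoff bound you invoke, and the asserted comparison of $\pi^{\ast}$-monochromatic edges on $G$ to $(n/m)^2$ times the $\pi$-count on $G[M]$ does not follow from what you wrote. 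The standard GGR fix is precisely the inner-sample device you name: draw $M_0\subset M$ with $|M_0|$ depending only on $\eps$, define for each of the $2^{|M_0|}$ partitions of $M_0$ a canonical extension to all of $V(G)$ (every vertex of $G$, not just of $V\setminus M$, is placed by majority vote against $M_0$), and compare the best canonical partition simultaneously to $\Delta_{bip}(G)$ (a single union bound over $2^{|M_0|}$ events, each controlled by summing a per-vertex Chernoff bound over $V$) and to $\Delta_{bip}(G[M])$ via the outer sample. As written, you mention this reduction but do not carry it through, so the lower tail still has a genuine gap.
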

It is implicit in~\cite{AShSu} that $m$ tends to infinity when $\eps$ goes to $0$ (in fact, it is not hard to see that this is the only way for Proposition~\ref{prop:nearbip} to be true). Thus, applying Proposition~\ref{prop:nearbip} with parameter $\eps/2$ to graphs $G$ with $E_{bip}(G)\geq\eps$, we obtain the following statement. 
\begin{cor}\label{cor:nearbip}
	For every $\eps>0$ there exists $m=m(\eps)$, with $m\rightarrow \infty$ as $\eps\rightarrow 0$, as follows. Suppose that $|G|=:n\geq m$, and $G$ is $\eps$-far from being bipartite. Then at least 
	$(1-\frac{\eps}{2})\binom{n}{m}$
	$m$-vertex induced subgraphs of $G$ are not bipartite.
\end{cor}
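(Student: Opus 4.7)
The plan is a direct reduction to Proposition~\ref{prop:nearbip}, together with a short separate argument for the qualitative monotonicity $m(\eps)\to\infty$.

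For the quantitative part, I would apply Proposition~\ref{prop:nearbip} with $\eps/2$ in place of $\eps$ and take $m := m(\eps/2)$. Given any $G$ on $n\geq m$ vertices with $E_{bip}(G)\geq \eps$, let $M\subseteq V(G)$ be a uniformly random $m$-subset and $G':=G[M]$. The proposition, with parameter $\eps/2$, yields
\[
\Prob\bigl[|E_{bip}(G')-E_{bip}(G)|>\eps/2\bigr]<\eps/2.
\]
On the complementary event, of probability at least $1-\eps/2$, we have $E_{bip}(G')\geq E_{bip}(G)-\eps/2 \geq \eps/2>0$. Since every bipartite graph satisfies $E_{bip}=0$, this forces $G'$ to be non-bipartite. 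Reading this probability as a count over the $\binom{n}{m}$ equally likely choices of $M$ gives at least $(1-\eps/2)\binom{n}{m}$ non-bipartite induced subgraphs, which is exactly the asserted bound.

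The remaining qualitative point, that $m(\eps)\to\infty$ as $\eps\to 0$, is the one the excerpt flags as implicit in \cite{AShSu} rather than literal in Proposition~\ref{prop:nearbip}, and this is where I expect the only real (but still mild) obstacle. I would argue by contradiction: suppose $m(\eps)\leq m_0$ for arbitrarily small $\eps$. For any induced subgraph $G'$ on at most $m_0$ vertices, $E_{bip}(G')$ lies in the discrete set $\{k/m_0^2 : k\in\mathbb{Z}_{\geq 0}\}$, so gaps between consecutive possible values are $\Omega(1/m_0^2)$. One can therefore construct, for any sufficiently small $\eps$, a graph $G$ on arbitrarily many vertices whose edit distance density $E_{bip}(G)$ lies at distance more than $\eps$ from every point of this discrete set -- concretely, plant a long odd cycle inside a balanced bipartite graph, tuning the edge count so that $E_{bip}(G)$ avoids the discrete grid while staying positive. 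Proposition~\ref{prop:nearbip} applied at such $\eps$ and threshold $m_0$ then fails, a contradiction. Hence $m(\eps)\to\infty$, and together with the preceding paragraph this yields the corollary.
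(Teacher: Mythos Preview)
Your proof is correct and follows the paper's approach exactly: apply Proposition~\ref{prop:nearbip} with parameter $\eps/2$, then argue separately that $m(\eps)\to\infty$. The paper dispatches the latter in a single parenthetical remark (``it is not hard to see that this is the only way for Proposition~\ref{prop:nearbip} to be true''), whereas you spell out the finite-range contradiction in detail.
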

\begin{proof}[Proof of Theorem~\ref{thm:trfreestability}]
	Without loss of generality we may assume that $\eps<1/100$. Let $m=m(\eps)$ be as in Corollary~\ref{cor:nearbip}. By choosing $\eps$ to be sufficiently small, by Corollary~\ref{cor:nearbip} we may assume that $m>100$.
	
	By Lemma~\ref{lem:trfree} we have
	\begin{equation}\label{eq:avgavg}
	\min \{\eta(H):|H|=m,\alpha(H)\leq 2,\chi(\overline{H})> 2 \}\geq \frac{m^2-17}{4m(m-1)}=\frac{1}{4}+\frac{m-17}{4m(m-1)},
	\end{equation}
	and for co-bipartite graphs $H$ of order $m$, by Lemma~\ref{lem:bipartite}, we have
	\begin{equation}\label{eq:minusbeta}
	\eta(H)\geq \frac{m(m-2)}{4m(m-1)}=\frac{1}{4}-\frac{m}{4m(m-1)}.
	\end{equation}
	\noindent
	Suppose now that $G$ is co-triangle-free, with $|G|=n\geq m$, and $\oG$ is $\eps$-far from being bipartite.
	Applying Lemma~\ref{lem:avg} iteratively gives
	$$\eta(G)\geq\frac{1}{\binom{n}{m}}\sum_{M\in \binom{V(G)}{m}} \eta(G[M]).
	$$ 
	Combining this with Corollary~\ref{cor:nearbip}, ~\eqref{eq:avgavg} and~\eqref{eq:minusbeta}, we obtain
	\begin{align*}
	\eta(G)&\geq \frac{1}{\binom{n}{m}}\sum_{M\in \binom{V(G)}{m}} \eta(G[M])\\
	&=\frac{1}{\binom{n}{m}}\left(\sum_{M: \chi(\oG[M])>2}\eta(G[M])+
	\sum_{M: \chi(\oG[M])\leq 2}\eta(G[M])
	\right)\\ 
	&\geq (1-\frac{\eps}{2})\left(\frac{1}{4}+\frac{m-17}{4m(m-1)}\right)+
	\frac{\eps}{2}\left(\frac{1}{4}-\frac{m}{4m(m-1)}\right)\\
	&>\frac{1}{4}+\frac{m-17-\eps m+8\eps}{4m(m-1)}>\frac{1}{4}+\frac{1}{8m}.
	\end{align*}
	By the definition of $\eta$ and Proposition~\ref{prop:HR}, $$\nu(G)>(\frac{1}{4}+\frac{1}{8m})n^2+o(n^2).$$ 
	Hence, the desired statement holds with $\delta:={1}/{(8m)}$.
\end{proof}

\section{Discussion}\label{sec:discuss}

It suggest itself to use the same approach in order to tackle Conjecture~\ref{conj:erd}. Indeed, extending the definition of $\eta$ to arbitrary graphs $G$ via 
$$\eta(G):=\frac{\nustar(G)+\nustar(\oG)}{n(n-1)},
$$
the results of Section~\ref{sec:prelim} transfer straightforwardly. That said, for general graphs $\eta(G)\leq 1/4$ \emph{does not} imply that either $G$ or $\oG$ is bipartite. Take, for instance $K_{n/2,n/2}$, and add any number $\ell\leq n/8$ of edges to it. Then the largest monochromatic triangle packing in the resulting colouring $G \cup \oG$ has size at most $$2\binom{n/2}{2}+2\ell\leq \frac{n^2-2n}{4}+\frac{n}{4}=\frac{n(n-1)}{4}.$$
We suspect, however, that this is essentially the only obstruction to having $\eta(G)> 1/4$. In light of Theorem~\ref{thm:trfree}, the following strengthening of Conjecture~\ref{conj:erd} appears plausible.
\begin{conj}\label{conj:flips}
Suppose that $|G|=n\geq 26$ and $\eta(G)\leq 1/4$. Then either $G$ or $\oG$ can be made bipartite by removing at most $n/8$ edges.
\end{conj}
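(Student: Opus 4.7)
The plan is to follow the two-step framework of Theorems~\ref{thm:trfree} and~\ref{thm:trfreestability}, adapted to the symmetric quantity $\eta(G)=(\nustar(G)+\nustar(\oG))/(n(n-1))$. The first milestone should be a symmetric analogue of Lemma~\ref{lem:crittrfree}: call $G$ \emph{sym-critical} if neither $G$ nor $\oG$ is bipartite, yet for some vertex $v$ at least one of $G\setminus\{v\}, \oG\setminus\{v\}$ is bipartite. The aim is to show that every sym-critical $G$ on sufficiently many vertices satisfies $\eta(G)>1/4$. Assuming WLOG that $\oG\setminus\{v\}$ is bipartite with parts $U,W$, the graph $G\setminus\{v\}$ is a union of cliques on $U$ and $W$ together with some inter-part edges. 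Unlike in the proof of Lemma~\ref{lem:crittrfree}, $G[X,Y]$ (with $X=N_{\oG}(v)\cap U$, $Y=N_{\oG}(v)\cap W$) need not be complete bipartite, but this very fact produces $\oG$-triangles $\{v,x,y\}$ for each missing edge $\{x,y\}$. The idea is to combine the Lemma~\ref{lem:crittrfree}-style packing for $\nustar(G)$ (matchings inside cliques, star packings through $v$, together with Lemma~\ref{lem:matching}) with an explicit $v$-star packing for $\nustar(\oG)$: inter-part edges across $X\times Y$ that are present in $G$ feed the $\mathcal{X},\mathcal{Y}$ packings in $G$, while those that are missing feed the $\oG$-packings, and the sum should remain above $n(n-1)/4$.

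With the sym-critical lemma in hand, the stability step mirrors Theorem~\ref{thm:trfreestability}: Lemma~\ref{lem:avg} extends verbatim to the symmetric $\eta$, and applying Proposition~\ref{prop:nearbip} to both $G$ and $\oG$ shows that if neither $G$ nor $\oG$ is $\eps$-close to bipartite, then most induced $m$-subgraphs are sym-critical, so the iterated averaging forces $\eta(G)>1/4+\delta(\eps)$. Hence $\eta(G)\leq 1/4$ implies at least one of $G, \oG$ is $\eps$-close to bipartite, for every $\eps>0$ and every sufficiently large $n$. After swapping $G\leftrightarrow\oG$ if necessary, we may assume $G$ itself is $\eps$-close to bipartite; fix a near-optimal bipartition $V=U\cupdot W$ with $|U|\approx|W|\approx n/2$, and let $H\subseteq G$ denote the intra-part subgraph and $H'\subseteq\oG$ the missing inter-part subgraph, so that $e(H), e(H')=o(n^2)$.

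The final and hardest step is to sharpen ``$\eps$-close'' to the exact bound $e(H)\leq n/8$. The model calculation in Section~\ref{sec:discuss} motivates targeting the lower bound
\[\nustar(G)+\nustar(\oG)\geq 3e(H)+\left(2\binom{n/2}{2}-e(H)\right)=2e(H)+\frac{n(n-2)}{4},\]
which exceeds $n(n-1)/4$ precisely when $e(H)>n/8$, delivering the required contradiction. The first summand arises from a book-style triangle packing in $G$ (one triangle $\{u_1,u_2,w\}$ per $H$-edge $\{u_1,u_2\}$, with distinct $w\in W$ avoiding $H'$), and the second from Lemma~\ref{lem:matching} applied to each quasi-clique of $\oG$. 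The main obstacle is handling the cases when $H'\neq\emptyset$ or when $H\cap U, H\cap W$ are not matchings: $H'$-edges incident to endpoints of $H$-edges can block the book packing on the $G$ side, and Lemma~\ref{lem:matching} does not directly apply when the clique-minus subgraph is non-matching on the $\oG$ side. Overcoming these will likely require a generalisation of Lemma~\ref{lem:matching} to arbitrary sparse removals, a structural analysis controlling the interaction between $H$ and $H'$, and most probably a small-case computer verification akin to the one underlying Lemma~\ref{lem:trfree} in order to pin down the precise constant $n/8$ rather than $n/8+o(n)$.
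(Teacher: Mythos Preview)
This statement is Conjecture~\ref{conj:flips}, which the paper explicitly leaves \emph{open}; there is no proof in the paper to compare against. The paper's only remark is that bridging small-$n$ computer checks with large-$n$ stability ``seems at present much harder for general graphs than in the triangle-free case.'' Your proposal is thus a research plan for an open problem, and its first milestone already fails.

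The sym-critical lemma you aim for is false, and the counterexample is the very graph from Section~\ref{sec:discuss}. Take $H=K_{n/2,n/2}$ together with one intra-part edge $e=\{u_1,u_2\}$. Neither $H$ nor $\overline{H}$ is bipartite, but $H\setminus\{u_1\}$ is, so $H$ is sym-critical in your sense. Every triangle of $H$ contains $e$, hence $\nustar(H)=3$; and $\overline{H}$ is a disjoint union of $K_{n/2}$ and $K_{n/2}^{-1}$, so by Lemma~\ref{lem:matching} $\nustar(\overline{H})=2\binom{n/2}{2}-1$. Therefore
\[
\nustar(H)+\nustar(\overline{H})=\frac{n(n-2)}{4}+2\le\frac{n(n-1)}{4}\qquad\text{for all }n\ge 8,
\]
i.e.\ $\eta(H)\le 1/4$ for arbitrarily large $n$. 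More generally, adding up to $n/8$ intra-part edges keeps $\eta\le 1/4$ while staying sym-critical, so there is no uniform $\eta>1/4$ on this class. Consequently the averaging step you describe cannot work either: it needs, as input, that every $m$-vertex graph $H$ with $\chi(H),\chi(\overline{H})>2$ satisfies $\eta(H)\ge 1/4+c(m)$ (this is exactly how \eqref{eq:avgavg} is used in the proof of Theorem~\ref{thm:trfreestability}), and the same family refutes that statement too. This is precisely the obstruction the paper is flagging: unlike in the co-triangle-free case, the set $\{\eta\le 1/4\}$ genuinely contains graphs that are only \emph{near}-bipartite, so Lemma~\ref{lem:crittrfree} has no direct symmetric analogue, and a proof of Conjecture~\ref{conj:flips} will require a new idea beyond the critical/averaging framework.
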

\noindent
The main challenge in proving Conjecture~\ref{conj:flips} is to bridge the gap between computer simulations for small $n$ and stability arguments for larger $n$. 
This seems at present much harder for general graphs than in the triangle-free case.

\subsection*{Further open problems}

As several predecessor papers~\cite{EFGJL, KS} did, we would like to draw the reader's attention to a related conjecture of Jacobson, which states that for every $n$-vertex graph $G$, one of $G$ and $\oG$ will have a triangle packing with at least $n^2/20-o(n^2)$ triangles, which is tight for the $C_5$-blowup. To prove this conjecture one would need a new idea, since the averaging approach \`a la Lemma~\ref{lem:avg} is unlikely to work.  

The works \cite{KS} and ~\cite{Yu} also discussed packings with monochromatic $k$-cliques instead of triangles. It would be interesting to study this systematically for arbitrary fixed graphs $H$, and an arbitrary number of colours.
\begin{ques}
For $c\geq 2$ and a fixed graph $H$, how many edge-disjoint monochromatic copies of $H$ are guaranteed to exist in a $c$-colouring of the edges of $K_n$?	
\end{ques}	
Specifically, it would be interesting to extend Theorem~\ref{thm:trfree} to arbitrary graphs $H$. 
\begin{ques}
How many edge-disjoint copies of $H$ are guaranteed to exist in an $n$-vertex graph whose complement is $H$-free?
\end{ques}

\section*{Acknowledgement}
I would like to thank David Conlon for helpful discussions, and Olga Goulko for help with setting up the computer simulation.

\end{document}